\newtheorem{thm}{Theorem}
\newtheorem{prop}[thm]{Proposition}
\newtheorem{lem}[thm]{Lemma}
\newtheorem{cor}[thm]{Corollary}
\begin{document}

\title{Counting with 3-valued truth tables of bracketed formulae connected by implication }
\author{\textit{Volkan Yildiz} }
\date{vo1kan@hotmail.co.uk}
\maketitle

\abstract
In this paper we investigate the combinatorical structure of the Kleene type truth tables
of all bracketed formulae with n distinct variables connected by the binary connective
of implication.
\\\\\\

Keywords: Propositional logic, implication, Catalan numbers, asymptotic\\
AMS classification: 05A15, 05A16, 03B05.

\pagebreak

\chead{ hello}
\section{Notations}
\begin{itemize}
\item $p_1,...,p_n$, and  $\phi, \; \psi$ are all distinct propositional variables.
\item `True' will be denoted by 1
\item `False' will be denoted by 0
\item `Unknown' will be denoted by 2
\item The set of counting numbers is denoted by $\mathbb{N}$
\item : such that
\item $\nu$ is the valuation function $\; : \;$ $\nu(\phi)=1\;$ if $\phi$ is true,  $\nu(\phi)=0\;$ if $\phi$ is false, and
 $\nu(\phi)=2\;$ if $\phi$ is unknown. 
\item $\wedge, \; \vee$ are the conjunction and disjunction operators.
\item $\Rightarrow$ the implication operator
\item $\neg$ the negation operator
\item $\#c$ denotes the case number in $t_n^{\#c}$
\item For the coefficient of $x^n$ in $G(x)$
$
[x^n]G(x)=[x^n]\bigg(\sum_{n\geq1} g_nx^n\bigg)=g_n
$
\end{itemize}

\section{Preface}
We have written in former papers about counting in truth tables in 2012. This paper was written in 2013, but never been published
due to time constraints and changes in my living conditions. Now during the pandemic period, I have some time in my hands to return to
my incomplete work. For 2-valued truth tables counting arguments one can refer back to my paper `General combinatorical structure of truth tables of bracketed formulae connected with implication', \cite{V1}.

\section{Intro}
Recall that the number of bracketings of a product of n terms is the Catalan number:
\[
C_n=\frac{1}{n}{{2n-2}\choose{n-1}}
\]
and its generating function 
\[
C(x)=\frac{1-\sqrt{1-4x}}{2}.
\]
Here we define the implication operator in the following way, ``Kleene's way"
\begin{displaymath}
\begin{array}{|c|c| c|c|}
\hline
\Rightarrow &1 & 0 & 2  \\ 
\hline 
 1 & 1 & 0 &2\\
\hline
0& 1 & 1&1\\
\hline
2 & 1 & 2 & 2\\
\hline

\end{array}
\end{displaymath}

\begin{prop}
Let $g_n$ be the total number of rows in all Kleene truth tables for bracketed implication with n distinct variables $p_1, . . . , p_n$. Then
\[
g_n=\sum_{i=1}^{n-1}g_ig_{n-i}, \;\; g_1=3
\]
\end{prop}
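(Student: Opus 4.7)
The plan is to prove the recurrence by a structural decomposition of bracketed formulae at the outermost implication, mirroring the standard derivation of the Catalan recurrence. First I would observe that $g_n$ admits the closed form $g_n = C_n \cdot 3^n$, since there are $C_n$ bracketings of $n$ distinct variables and each induces a Kleene truth table with exactly $3^n$ rows, one for each assignment of the values $\{0,1,2\}$ to $p_1,\ldots,p_n$. The base case $g_1 = 3$ then follows immediately: there is a single ``bracketing'' $p_1$ whose Kleene truth table has three rows.

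For the inductive step, I would appeal to the fact that every bracketed formula $\Phi$ on $p_1,\ldots,p_n$ with $n \geq 2$ decomposes uniquely as $\Phi = (\phi)\Rightarrow(\psi)$, where $\phi$ is a bracketed formula on $p_1,\ldots,p_i$ and $\psi$ is a bracketed formula on $p_{i+1},\ldots,p_n$ for a unique split index $i \in \{1,\ldots,n-1\}$; this is precisely the bijection underlying the classical Catalan recurrence $C_n = \sum_{i=1}^{n-1} C_i C_{n-i}$. Moreover, a valuation of $p_1,\ldots,p_n$ is the same datum as a pair consisting of a valuation of $p_1,\ldots,p_i$ together with a valuation of $p_{i+1},\ldots,p_n$, so the rows of the truth table of $\Phi$ are in bijection with pairs of rows from the truth tables of $\phi$ and $\psi$. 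This gives $\mathrm{rows}(\Phi) = \mathrm{rows}(\phi)\cdot\mathrm{rows}(\psi)$, independently of the internal values printed by the Kleene table.

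Summing over all formulae with fixed split index $i$ and then over $i$, the contribution from each $i$ factorises as $\bigl(\sum_\phi \mathrm{rows}(\phi)\bigr)\bigl(\sum_\psi \mathrm{rows}(\psi)\bigr) = g_i \cdot g_{n-i}$, yielding $g_n = \sum_{i=1}^{n-1} g_i g_{n-i}$. The only real obstacle is verifying that the decomposition at the outermost implication is a genuine bijection onto ordered triples $(i,\phi,\psi)$ of the required shape; no property of the Kleene implication table from the Intro is used beyond the fact that the connective is binary, so that the variables partition into two contiguous blocks and row counts multiply. In this sense the same recurrence would hold for any binary connective with a $k$-valued truth table, recovering $g_n = C_n \cdot k^n$.
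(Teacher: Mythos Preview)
Your proof is correct and follows essentially the same idea as the paper's: both rest on the identification $g_n = 3^n C_n$ and the Catalan recurrence $C_n = \sum_{i=1}^{n-1} C_i C_{n-i}$ together with the factorisation $3^n = 3^i\cdot 3^{n-i}$. The paper compresses this into a single algebraic line, while you unpack the underlying bijection (decomposition at the outermost $\Rightarrow$ and the product structure of row sets) explicitly; the content is the same.
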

\begin{proof}
\begin{equation}\notag
g_n=3^nC_n=3^n\sum_{i=1}^{n-1}C_iC_{n-i}=\sum_{i=1}^{n-1}( 3^i C_i)( 3^{n-i}C_{n-i})=\sum_{i=1}^{n-i} g_ig_{n-i}
\end{equation}
\end{proof}
Thus it has the following generating function.
\begin{equation}
G(x)=(1-\sqrt{1-12x})/2
\end{equation}

\begin{prop} Let $f_n,\; t_n$ and $\;u_n$ be the number of rows with the value “false” , "true", and "unknow" in the Kleene truth tables of all
bracketed formulae with n distinct propositions $p_1, . . . , p_n$ connected by the binary connective
of implication. Then
$u_n$ has the following reccurence relation, and generating function $U(x)$.

\[
u_n=\sum_{i=1}^{n-1} u_ig_{n-i} =\sum_{i=1}^{n-1} u_i3^{n-i}C_{n-i}, \;\; u_1=1
\]

\[U(x)= \frac{1-\sqrt{1-12x}}{6}
\]
\end{prop}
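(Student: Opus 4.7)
The plan is to establish the recurrence by a direct count from the Kleene implication table, and then convert to the stated generating function. From the $\Rightarrow$ table I read off that the output is $2$ in exactly three cases: $(\nu(\phi),\nu(\psi)) \in \{(1,2),(2,0),(2,2)\}$. Any bracketed formula with $n \ge 2$ distinct variables decomposes uniquely at its outermost connective as $(\phi \Rightarrow \psi)$ with $\phi$ on $p_1,\dots,p_i$ and $\psi$ on $p_{i+1},\dots,p_n$, for some $i \in \{1,\dots,n-1\}$; since the two halves involve disjoint variables, a row of the whole table is a free product of a $\phi$-row and a $\psi$-row, and the bracketings of the two halves are chosen independently.

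For a single fixed bracketing of $\phi$ and $\psi$, the number of rows where $(\phi \Rightarrow \psi)$ evaluates to $2$ is $t_\phi u_\psi + u_\phi f_\psi + u_\phi u_\psi$, with one summand for each of the three unknown cases. Summing over all bracketings of $\phi$ and all bracketings of $\psi$ replaces $t_\phi, u_\phi, f_\phi$ by the aggregate quantities $t_i, u_i, f_i$ (and likewise on the $\psi$ side), and then summing over the split point $i$ yields
\[
u_n = \sum_{i=1}^{n-1}(t_i u_{n-i} + u_i f_{n-i} + u_i u_{n-i}).
\]
The key simplification is the index relabeling $i \mapsto n-i$ applied to the first summand, which gives $\sum_{i=1}^{n-1} t_i u_{n-i} = \sum_{i=1}^{n-1} u_i t_{n-i}$; factoring out $u_i$ then collapses everything to
\[
u_n = \sum_{i=1}^{n-1} u_i (t_{n-i}+f_{n-i}+u_{n-i}) = \sum_{i=1}^{n-1} u_i g_{n-i},
\]
and Proposition 1 gives $g_{n-i}=3^{n-i}C_{n-i}$, producing the second form.

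For the generating function, the recurrence (with $u_1=1$) translates to $U(x)G(x)=U(x)-x$, hence $U(x)=x/(1-G(x))$. Substituting $G(x)=(1-\sqrt{1-12x})/2$ gives $U(x)=2x/(1+\sqrt{1-12x})$, which I would rationalize by multiplying numerator and denominator by $1-\sqrt{1-12x}$ and using $1-(1-12x)=12x$ to arrive at $U(x)=(1-\sqrt{1-12x})/6$. The main obstacle I anticipate is the symmetrization step: the natural recurrence mixes $t_i$ with $u_{n-i}$, and only the index-swap trick turns this into the clean $\sum u_i g_{n-i}$ form stated by the proposition. A smaller but easily overlooked check is that the Kleene table contributes exactly three unknown cases, not more and not fewer.
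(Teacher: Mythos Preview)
Your proof is correct and follows essentially the same route as the paper: read off the three ``unknown'' cases from the Kleene table, write the raw convolution $u_n=\sum_i(t_iu_{n-i}+u_if_{n-i}+u_iu_{n-i})$, collapse it to $\sum_i u_ig_{n-i}$ via commutativity of the convolution, and pass to $U(x)=x+U(x)G(x)$. The only cosmetic difference is that the paper rewrites $u_if_{n-i}+u_iu_{n-i}=u_i(g_{n-i}-t_{n-i})$ and lets the cancellation $TU-UT=0$ happen at the generating-function level, whereas you perform the index swap $\sum t_iu_{n-i}=\sum u_it_{n-i}$ explicitly before passing to power series; your version is the more transparent of the two.
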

\begin{proof}
\[u_n = t_iu_{n-i}+u_{i}f_{n-i}+u_i u_{n-i} = t_iu_{n-i} + u_i(g_{n-i} - t_{n-i})\]
Summing over n, gives us
\[
U(x)=x+U(x)G(x)
\]
Solving this for $U(x)$ gives us the required result.
\end{proof}
\begin{cor}
The number of unknown entries in bracketed Kleene's truth table connected by the implication is given by 
\begin{equation}
u_n=\frac{3^{n-1}}{n} {{2n-2}\choose{n-1}}
\end{equation}
\end{cor}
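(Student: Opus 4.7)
The plan is to extract the coefficient $[x^n]U(x)$ from the closed form of the generating function obtained in the preceding proposition, and then recognize the result as a scaled Catalan number.

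First I would rewrite $U(x) = \tfrac{1}{6}\bigl(1 - \sqrt{1-12x}\bigr)$ in terms of the Catalan generating function $C(x) = \tfrac{1}{2}\bigl(1-\sqrt{1-4x}\bigr)$. Specifically, by substituting $y = 3x$ into $C(y) = \sum_{n\geq 1} C_n y^n$, I obtain
\begin{equation}\notag
\frac{1-\sqrt{1-12x}}{2} = C(3x) = \sum_{n\geq 1} C_n (3x)^n = \sum_{n\geq 1} 3^n C_n x^n .
\end{equation}
Dividing both sides by $3$ yields $U(x) = \sum_{n\geq 1} 3^{n-1} C_n x^n$, so that $u_n = 3^{n-1} C_n$.

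The last step is purely a matter of substituting the explicit formula $C_n = \frac{1}{n}\binom{2n-2}{n-1}$ recalled in the introduction, which gives exactly
\begin{equation}\notag
u_n = \frac{3^{n-1}}{n}\binom{2n-2}{n-1},
\end{equation}
as required.

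There is no real obstacle here: the whole proof is a one-line coefficient extraction once the substitution $y = 3x$ is noticed, and the hardest part has already been done in proving the closed form of $U(x)$ in the previous proposition. An alternative, purely inductive route would be to verify the formula $u_n = 3^{n-1}C_n$ directly from the recurrence $u_n = \sum_{i=1}^{n-1} u_i \, 3^{n-i} C_{n-i}$ using the Catalan convolution identity $C_n = \sum_{i=1}^{n-1} C_i C_{n-i}$, which provides a sanity check but is strictly longer than the generating-function argument above.
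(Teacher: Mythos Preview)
Your argument is correct. The paper actually gives no explicit proof of this corollary; it is left as immediate from the closed form of $U(x)$ (equivalently from $U(x)=G(x)/3$ together with $g_n=3^nC_n$ in Proposition~1), which is exactly the coefficient extraction you carry out.
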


\begin{prop}
$f_n$ has the following reccurence relation, 
\[
f_n=\sum_{i=1}^{n-1} f_i\bigg( 2C_{n-i}3^{n-i-1}-f_{n-i} \bigg),\;\; f_1=1
\]
and $f_n$ has the following generating function
\[ F(x) = \frac{-2-\sqrt{1-12x}+\sqrt{5+24x+4\sqrt{1-12x}}}{6}
\]
\end{prop}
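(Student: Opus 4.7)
The plan is to derive the recurrence combinatorially from Kleene's implication table, then translate it to a functional equation and solve a quadratic in $F(x)$.

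First I would use unique parsing: for $n\ge 2$ every bracketed formula factors at its outermost connective as $\phi\Rightarrow\psi$, where $\phi$ is built from the first $i$ variables and $\psi$ from the last $n-i$ for some $1\le i\le n-1$. Inspecting Kleene's table shows that $\phi\Rightarrow\psi$ evaluates to $0$ in exactly those rows where $\nu(\phi)=1$ and $\nu(\psi)=0$. Summing over split positions and over rows of the sub-formulae therefore gives $f_n=\sum_{i=1}^{n-1}t_i f_{n-i}$, which after reindexing is $\sum_{i=1}^{n-1}f_i t_{n-i}$. To put this in the form stated, I would use the corollary $u_k=3^{k-1}C_k$ and hence $g_k=3u_k$, so that $t_k+f_k=g_k-u_k=2u_k$ and therefore $t_{n-i}=2\cdot 3^{n-i-1}C_{n-i}-f_{n-i}$. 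The base case $f_1=1$ records the single $0$-row of a single variable.

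Next I would translate to generating functions. Reading the recurrence as a Cauchy product and adding the $n=1$ term yields
\[
F(x)=x+F(x)T(x),
\]
while $G=F+T+U$ together with Proposition~2 (which gives $U=G/3$) produces $T(x)=\tfrac{2}{3}G(x)-F(x)=\tfrac{1-\sqrt{1-12x}}{3}-F(x)$. Substituting gives the quadratic
\[
F(x)^2+\frac{2+\sqrt{1-12x}}{3}\,F(x)-x=0.
\]
The quadratic formula, together with the identity $(2+\sqrt{1-12x})^2+36x=5+24x+4\sqrt{1-12x}$, yields
\[
F(x)=\frac{-(2+\sqrt{1-12x})\pm\sqrt{5+24x+4\sqrt{1-12x}}}{6},
\]
and the $+$ branch is forced by $F(0)=0$: at $x=0$ the $+$ choice gives $-3+\sqrt{9}=0$, while the $-$ choice gives $-1$.

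The only real obstacle is the nested-radical bookkeeping in the last step: simplifying the discriminant to the shape appearing in the statement and then picking the correct branch by boundary value. The combinatorial content sits entirely in the first paragraph — the unique top-level decomposition, together with the observation that Kleene's implication produces $0$ from a unique input pair $(1,0)$. Everything after that is algebraic.
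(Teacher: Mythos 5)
Your proposal is correct and follows essentially the same route as the paper: decompose at the outermost implication to get $f_n=\sum_i f_i t_{n-i}$, eliminate $t_{n-i}$ via $t_k=g_k-f_k-u_k=2\cdot 3^{k-1}C_k-f_k$, sum to the functional equation $F(x)=x+2F(x)U(x)-F(x)^2$, and solve the quadratic. You are merely more explicit than the paper about the combinatorial justification (the unique $(1,0)$ input pair in Kleene's table) and about selecting the root with $F(0)=0$.
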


\begin{proof}
Consider
\[
\begin{split}
f_n  &= f_it_{n-i}\\
&=  f_i(g_{n-i}-f_{n-i}-u_{n-i})\\
&=  f_i3^{n-i}C_{n-i}- f_if_{n-i} - f_i3^{n-i-1}C_{n-i} \\
&= f_i(2C_{n-i}3^{n-i-1}-f_{n-i})
\end{split}
\]
Summing over n, gives us
\[
F(x)=2F(x)U(x)-F(x)^2+x.
\]
Solving it for  $F(x)$ gives us the required result. 
\end{proof}

\begin{cor}
$t_n$ has the following generating function
\[
T(x)=\frac{4-\sqrt{1-12x}-\sqrt{5+24x+4\sqrt{1-12x}}}{6}
\]
\end{cor}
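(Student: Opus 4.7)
The plan is to exploit the obvious counting identity $g_n = t_n + f_n + u_n$, which simply says that every row of every Kleene truth table records the value \textbf{true}, \textbf{false}, or \textbf{unknown}. Translating this into generating functions gives
\[
T(x) = G(x) - F(x) - U(x),
\]
and since $G$, $F$, and $U$ have already been determined earlier in the paper, the corollary reduces to a single substitution and simplification.

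Concretely, I would first recall the three known expressions: $G(x) = (1-\sqrt{1-12x})/2$ from Proposition 1, $U(x) = (1-\sqrt{1-12x})/6$ from Proposition 2, and the formula for $F(x)$ from Proposition 3. Then I would put all three terms over the common denominator $6$, rewriting $G(x) = 3(1-\sqrt{1-12x})/6$, so that
\[
T(x) = \frac{3(1-\sqrt{1-12x}) - (1-\sqrt{1-12x}) - \bigl(-2 - \sqrt{1-12x} + \sqrt{5+24x+4\sqrt{1-12x}}\bigr)}{6}.
\]
Collecting the constant terms ($3-1+2 = 4$) and the $\sqrt{1-12x}$ terms ($-3+1+1 = -1$) and leaving the nested radical alone yields
\[
T(x) = \frac{4 - \sqrt{1-12x} - \sqrt{5+24x+4\sqrt{1-12x}}}{6},
\]
which is the desired formula.

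There is essentially no obstacle here: the step requires only that $g_n = t_n + f_n + u_n$ hold for every $n$, which is a tautology given the definitions, and that this transfers to generating functions term by term, which is immediate by linearity. The only thing worth double-checking is a sign or coefficient in the routine arithmetic; there is no new combinatorial content beyond what was already proved for $F(x)$ and $U(x)$.
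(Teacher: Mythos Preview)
Your proposal is correct and is exactly the intended argument: the paper states this result as an immediate corollary (with no separate proof) of the formulas for $G(x)$, $U(x)$, and $F(x)$, via $T(x)=G(x)-F(x)-U(x)$. Your arithmetic checks out.
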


\section{Asymptotic 1}

\begin{thm}
Let $f_n$, \; $t_n$\; and $u_n$ be number of rows with the value false, true and unknown in the Kleene truth tables of all the bracketed
implications with n variables. Then we have the following asymptotics
\[
f_n\sim \bigg(\frac{7-2\sqrt{7}}{21}  \bigg)\frac{12^{n-1}3}{\sqrt{\pi n^3}}
\]
\\
\[
t_n\sim \bigg(\frac{7+2\sqrt{7}}{21}  \bigg)\frac{12^{n-1}3}{\sqrt{\pi n^3}}
\]
\\
\[
u_n\sim \bigg(\frac{1}{3}  \bigg)\frac{12^{n-1}3}{\sqrt{\pi n^3}}
\]
\\
\[
g_n \sim \frac{12^{n-1}3}{\sqrt{\pi n^3}}
\]
\end{thm}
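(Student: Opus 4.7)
The plan is to split the four asymptotics into two groups. The formulas for $g_n$ and $u_n$ both admit closed-form expressions in terms of Catalan numbers, so for these I would apply Stirling's approximation directly. For $f_n$ and $t_n$ the generating functions do not factor so neatly, so I would turn to singularity analysis in the style of Flajolet--Odlyzko applied to $F(x)$ and $T(x)$.

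For the first group, using $g_n = 3^n C_n$ with $C_n = \frac{1}{n}\binom{2n-2}{n-1}$ and the Stirling estimate $\binom{2n-2}{n-1} \sim 4^{n-1}/\sqrt{\pi n}$, I obtain
\[
g_n \sim \frac{3^n \cdot 4^{n-1}}{\sqrt{\pi n^3}} = \frac{3 \cdot 12^{n-1}}{\sqrt{\pi n^3}}.
\]
The preceding Corollary gives $u_n = g_n/3$, so its asymptotic follows immediately with constant factor $1/3$.

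For $F(x)$, the dominant singularity is at $\rho = 1/12$, inherited from the inner radical $\sqrt{1-12x}$. I would introduce the local variable $u := \sqrt{1-12x}$, so that $24x = 2 - 2u^2$, and rewrite
\[
\sqrt{5 + 24x + 4\sqrt{1-12x}} = \sqrt{7 + 4u - 2u^2},
\]
which is analytic in $u$ at $u=0$ since $7>0$. A first-order Taylor expansion gives $\sqrt{7} + (2/\sqrt{7})\,u + O(u^2)$, so substituting back shows that the singular part of $F(x)$ near $x=1/12$ is a scalar multiple of $\sqrt{1-12x}$ with coefficient $(2\sqrt{7}-7)/42$. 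Invoking the transfer relation $[x^n]\sqrt{1-12x} \sim -12^n/(2\sqrt{\pi n^3})$ then yields the claimed asymptotic for $f_n$. The asymptotic for $t_n$ follows most cheaply from the identity $t_n = g_n - f_n - u_n$ and the arithmetic $1 - 1/3 - (7-2\sqrt{7})/21 = (7+2\sqrt{7})/21$; alternatively, one can repeat the same analysis on $T(x)$ as an independent check.

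The main obstacle I anticipate is justifying the singularity analysis rigorously: one must verify that $\rho = 1/12$ is the unique dominant singularity of $F$ (and $T$) on the circle $|x|=\rho$, and that the inner expression $5 + 24x + 4\sqrt{1-12x}$ stays bounded away from zero on a suitable $\Delta$-domain around $1/12$, so that the outer radical introduces no spurious branch point closer to the origin. Positivity at the real endpoints (the expression equals $9$ at $x=0$ and $7$ at $x=1/12$) is reassuring; extending this to a complex neighbourhood is the routine but genuinely non-trivial step before the Flajolet--Odlyzko transfer theorem can be invoked cleanly.
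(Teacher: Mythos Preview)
Your proposal is correct and, for the key case of $f_n$, coincides with the paper's argument: both extract the coefficient of $\sqrt{1-12x}$ in the local expansion of $F(x)$ at the dominant singularity $x=1/12$. The paper packages this as the limit $\lim_{x\to 1/12}(F(x)-F(1/12))/\sqrt{1-12x}$, evaluated by L'H\^opital, while you perform the equivalent substitution $u=\sqrt{1-12x}$ and Taylor-expand the outer radical $\sqrt{7+4u-2u^2}$; the resulting constant $(2\sqrt7-7)/42$ agrees with the paper's $(7-2\sqrt7)/(-42)$.

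Where you diverge slightly is in the remaining three sequences. The paper simply says ``similar arguments'' apply to $t_n$ and $u_n$ (and implicitly $g_n$), i.e.\ it would rerun the same singularity computation on $T(x)$, $U(x)$, $G(x)$. You instead exploit the closed forms $g_n=3^nC_n$ and $u_n=g_n/3$ via Stirling, and recover $t_n$ from the linear relation $t_n=g_n-f_n-u_n$. This is a modest economy rather than a different idea; it buys you fewer calculations and an automatic consistency check. Your closing remarks about verifying that $5+24x+4\sqrt{1-12x}$ does not vanish on a $\Delta$-domain, so that $1/12$ really is the unique dominant singularity, supply a point of rigour that the paper leaves implicit.
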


\begin{proof}

Recall 
\[ F(x) = \frac{-2-\sqrt{1-12x}+\sqrt{5+24x+4\sqrt{1-12x}}}{6}\]
By using the asymptotic techniques that we have discussed in \cite{V1}, we have $r=\frac{1}{12}$ and $F(\frac{1}{12})\not=0$. So let 
$A(x)=F(x)-F(\frac{1}{12})$.
\[
\begin{split}
\lim_{x \to \frac{1}{12}} \frac{A(x)}{f(x)}  &= \lim_{x\to\frac{1}{12}}\frac{-\sqrt{1-12x}+\sqrt{5+24x+\sqrt{1-12x}}-\sqrt{7}}{6\sqrt{1-12x}}\\
&= \lim_{x\to \frac{1}{!2}} \frac{-\sqrt{5+24x+4\sqrt{1-12x}}+2\sqrt{1-12x}-2}{6\sqrt{5+24x+4\sqrt{1-12x}}}\\
&= \frac{7-2\sqrt{7}}{-42}
\end{split}
\]
Therefore
\[
f_n\sim  \frac{7-2\sqrt{7}}{-42} {{n-3/2}\choose{n}}\bigg(\frac{1}{12}\bigg)^{-n}\sim  \bigg(\frac{7-2\sqrt{7}}{21}\bigg) \frac{12^{n-1}3}{\sqrt{\pi n^3}}
\]
With similar arguments we have the above asymptotics for $t_n$ and $u_n$.
\end{proof}

\begin{cor}
The number of rows with unknown in the Kleene truth tables is the average of 
the number of rows with true and false.
\[
 \frac{f_n+t_n}{2}=u_n, \;\;\; \forall n\geq1.
\]
\end{cor}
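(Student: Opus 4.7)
The plan is to reduce the identity to a trivial arithmetic consequence of the closed-form expressions already established in the preceding section. Every row of a Kleene truth table is labelled by exactly one of the three values, so we have the basic partition identity $g_n = f_n + t_n + u_n$, valid for all $n \geq 1$. Rearranging, $f_n + t_n = g_n - u_n$, and thus the claim $\frac{f_n+t_n}{2} = u_n$ is equivalent to showing $g_n = 3 u_n$.

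The first step is to invoke the closed forms. From the proof of Proposition~1 we have $g_n = 3^n C_n$, and from Corollary~1 we have $u_n = 3^{n-1} C_n$. Substituting,
\[
g_n - u_n = 3^n C_n - 3^{n-1} C_n = 2 \cdot 3^{n-1} C_n = 2 u_n,
\]
so $f_n + t_n = 2 u_n$, which is the claim.

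As a cross-check, I would verify the same identity at the level of generating functions: summing the explicit expressions for $F(x)$ and $T(x)$ displayed in Proposition~3 and Corollary~2, the $\sqrt{5+24x+4\sqrt{1-12x}}$ contributions cancel, leaving
\[
F(x) + T(x) = \frac{2 - 2\sqrt{1-12x}}{6} = \frac{1-\sqrt{1-12x}}{3} = 2 U(x),
\]
and extracting the coefficient of $x^n$ recovers $f_n + t_n = 2 u_n$. There is essentially no obstacle here; the only thing to be careful about is citing the correct ingredient, namely that the explicit formula $u_n = 3^{n-1} C_n$ from Corollary~1 was derived independently (via $U(x) = \frac{1-\sqrt{1-12x}}{6}$) and does not itself rely on the statement we are trying to prove.
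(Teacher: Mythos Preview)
Your proof is correct and follows essentially the same route as the paper: use the partition $g_n=f_n+t_n+u_n$ together with the closed forms $g_n=3^nC_n$ and $u_n=3^{n-1}C_n$ to conclude $f_n+t_n=2u_n$. The generating-function cross-check you add is a nice extra consistency verification that the paper does not include; note only that in the paper's shared numbering the results you cite are Proposition~1, Corollary~3, Proposition~4, and Corollary~5.
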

\begin{proof}
Since $u_n=3^{n-1}C_n \; \forall n\geq 1$, we have
\begin{equation}\notag
\begin{split}
f_n + t_n & = \frac{2}{3}g_n\\
3(f_n+t_n)&= 2g_n\\
f_n+t_n &= 2g_n-2f_n-2t_n\\
f_n+t_n&=2u_n\\
u_n&=\frac{f_n+t_n}{2}.
\end{split}
\end{equation}

\end{proof}
$\;$\\
Note here 
\[ \frac{7-2\sqrt{7}}{21} \approx 0.0813570180, \;\;
\frac{7+2\sqrt{7}}{21} \approx 0.5853096486 ,\;\; \frac{1}{3}\approx 0.333333333\]
\\
\[
\frac{7-2\sqrt{7}}{21} +
\frac{7+2\sqrt{7}}{21}=\frac{2}{3}
\]
The below table shows the sequences which we have discussed so far, up to n = 9.
\begin{center}
 \begin{tabular}{||c c c c c c c c c c c ||} 
 \hline\hline
$n$ & 1 & 2 & 3 & 4 & 5 & 6 & 7 & 8 & 9 &  \\ [0.5ex] 
 \hline\hline
$t_n$ & 1 & 5 & 30 & 229 & 1938 & 17530& 165852 & 1621133 & 16242474 &  \\ 
 \hline
$f_n$ & 1 & 1 & 6 & 41 & 330 & 2882 & 26604 & 255313 & 2521986 &\\
 \hline
$u_n$ & 1 & 3 & 18 & 135 & 1134& 10206 & 96228& 938223 & 9382230&\\
 \hline
$g_n$ & 3 & 9  & 54 & 405 & 3402 & 30618 & 288684& 2814669 & 28146690& \\
 \hline
 \hline
\end{tabular}
\end{center}
\section{Generation of truth table sequences}

Since $g_n=t_n+f_n+u_n, \; \forall n\geq 1$, we have
\begin{equation}\notag
\begin{split}
g_n & =\sum g_ig_{n-i} \\
& = \sum (t_i+f_i+u_i)(t_{n-i}+f_{n-i}+u_{n-i})\\
& = \underbrace{\sum t_it_{n-i}}_{t_n^{\#1}} + \underbrace{\sum t_if_{n-i}}_{f_n} + \underbrace{\sum t_iu_{n-i}}_{u_n^{\#1}} +
\underbrace{\sum f_it_{n-i}}_{t_n^{\#2}} +  \underbrace{\sum f_if_{n-i}}_{t_n^{\#3}} \\
 &+ \underbrace{\sum f_iu_{n-i}}_{t_n^{\#4}} + \underbrace{\sum u_it_{n-i}}_{t_n^{\#5}} + \underbrace{\sum u_if_{n-i}}_{u_n^{\#2}} + \underbrace{\sum u_iu_{n-i}}_{u_n^{\#3}}
\end{split}
\end{equation}
We can generate nine more sequences: $t_n^{\#1},\; t_n^{\#2},\;t_n^{\#3},\;t_n^{\#4},\;t_n^{\#5},\;f_n,\;u_1^{\#1},\;u_n^{\#2},\;$ and $u_n^{\#3},\;$ except from $f_n$, all other sequences equals to 0 when $n=1$. Each of these sequences, (and their generating functions) counts different rows of the corresponding truth table.
E.g. 
Let $\phi$ and $\psi$ be propositional variables, then
\begin{equation}\notag
\begin{split}
\nu(\phi \Rightarrow \psi) = 1\; &:\; (\nu(\phi) = 1 = \nu(\psi))\\
\nu(\phi \Rightarrow \psi) = 1\; &:\; (\nu(\phi) = 0 \wedge \nu(\psi)=1)\\
\nu(\phi \Rightarrow \psi) = 1\; &:\; (\nu(\phi) = 0 =\nu(\psi))\\
\nu(\phi \Rightarrow \psi) = 1\; &:\; (\nu(\phi) = 0 \wedge  \nu(\psi)=2)\\
\nu(\phi \Rightarrow \psi) = 1\; &:\; (\nu(\phi) = 2 \wedge \nu(\psi)=1).
\end{split}
\end{equation}
In each case we are interested in formulae obtained from $p_1\Rightarrow...\Rightarrow p_n$ by inserting
brackets such that the valuation of the first $i$ bracketing and the rest $(n-i)$ bracketing both give 1, ‘true’;  
such that the valuation of the first $i$ bracketing is 0 and the rest $(n-i)$ bracketing gives 1; 
such that the valuation of the first $i$ bracketing and the rest $(n-i)$ bracketing both give 0; 
such that the valuation of the first $i$ bracketing is 0 and and the rest $(n-i)$ bracketing gives 2; 
such that the valuation of the first $i$ bracketing is 2 and and the rest $(n-i)$ bracketing gives 1, respectively.
To get the corresponding generating function for $t_n^{\#1}$ \; we can make the following calculations
\[
t_n^{\#1}=\sum_{n=1}^{n-1} t_it_{n-1}
\]
Summing over $n$ gives us $T_1(x)= T(x)^2$. 
Using the same method we can obtain the following generating functions: 
$T_2(x)= F(x)T(x),$ \; $T_3(x)=F(x)^2$, \;    $T_4(x)=F(x)U(x)$, \; and  $T_5(x)=U(x)T(x)$.\\\\
A few terms for these \textit{ fresh } sequences:
\begin{equation}\notag
\begin{split}
(t_n^{\#1})_{n>0}&= 0,1, 10, 85, 758, 7066, 68180, 675725, 6840190, 70431982, 735446924,...\\
(t_n^{\#2})_{n>0}&= 0, 1, 6, 41, 330, 2882, 26604, 255313, 2521986, 25473638, 261898548,...\\
(t_n^{\#3})_{n>0}&= 0, 1, 2, 13, 94, 778, 6916, 64613, 625478, 6219070, 63138652,...\\
(t_n^{\#4})_{n>0}&= 0, 1, 4, 27, 212, 1830, 16760, 159963, 1573732, 15846354, 162518600,...\\
(t_n^{\#5})_{n>0}&= 0, 1, 8, 63, 544, 4974, 47392, 465519, 4681088, 47952810, 498672736,...\\
(t_n)_{n>0}&= 1, 5, 30, 229, 1938, 17530, 165852, 1621133, 16242474, 165923854, 1721675460,...
\end{split}
\end{equation}
Note that $\forall n\geq 1$ $\; t_n=\sum_{i=1}^5t_n^{\#i}$. With similar arguments we can get the 
rest of the generating functions and their corresponding sequences. 
\begin{equation}\notag
\begin{split}
\nu(\phi \Rightarrow \psi) = 0\; &:\; (\nu(\phi) = 1 \; \wedge \nu(\psi)=0)\\
\nu(\phi \Rightarrow \psi) = 2\; &:\; (\nu(\phi) = 1 \; \wedge \nu(\psi)=2)\\
\nu(\phi \Rightarrow \psi) = 2\; &:\; (\nu(\phi) = 2 \; \wedge  \nu(\psi)=0)\\
\nu(\phi \Rightarrow \psi) = 2\; &:\; (\nu(\phi) = 2 \; = \; \nu(\psi))\\
\end{split}
\end{equation}
$F(x)$ and $U(x)$ have been studied in former chapters. Here we want to  get more sequences from 
the original sequence  $u_n$, i.e. we want to break $u_n$ into $u_1^{\#1}$, $u_1^{\#2}$, and $u_1^{\#3}$. 
Moreover  the following corresponding generating function, and their sequences exist: $U_1(x)=T(x)U(x)$, $U_2(x)=U(x)F(x)$, and 
$U_3(x)=U(x)^2$.  
\begin{equation}\notag
\begin{split}
(u_n^{\#1})_{n>0}&= 0, 1, 8, 63, 544, 4974, 47392, 465519, 4681088, 47952810, 498672736,...\\
(u_n^{\#2})_{n>0}&= 0, 1, 4, 27, 212, 1830, 16760, 159963, 1573732, 15846354, 16251860,...\\
(u_n^{\#3})_{n>0}&= 0, 1, 6, 45, 378, 3402, 32076, 312741, 3127410, 31899582, 330595668,...\\
(u_n)_{n>0}&= 1, 3, 18, 135, 1134, 10206, 96228, 938223, 9382230, 95698746, 9917870040,...
\end{split}
\end{equation}
Consequently, the following observation is essential.
\begin{cor}
\[
g_n= \sum_{i=1}^5t_n^{\#i} +\sum_{i=1}^{3}u_n^{\#i} +f_n \sum_{i=1}^3t_n^{\#i} +2(t_n^{\#4}+t_n^{\#5})+u_n^{\#3} +f_n 
\]
\end{cor}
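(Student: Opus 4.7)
The plan is to begin from Proposition~1, namely $g_n = \sum_{i=1}^{n-1} g_i g_{n-i}$, and substitute the pointwise decomposition $g_k = t_k + f_k + u_k$ into each factor. Expanding the resulting bilinear product produces the nine cross-convolutions $\sum t_i t_{n-i}, \sum t_i f_{n-i}, \sum t_i u_{n-i}, \sum f_i t_{n-i}, \sum f_i f_{n-i}, \sum f_i u_{n-i}, \sum u_i t_{n-i}, \sum u_i f_{n-i}, \sum u_i u_{n-i}$, which are exactly the nine sequences named at the start of Section~5. Reading off the labels yields the ``long'' form
\[
g_n = \sum_{i=1}^{5} t_n^{\#i} + \sum_{i=1}^{3} u_n^{\#i} + f_n.
\]

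The second step is to exploit the commutativity of multiplication in $\mathbb{Q}[[x]]$ together with the generating-function identifications listed just before the corollary: $T_4(x) = F(x)U(x) = U(x)F(x) = U_2(x)$ and $T_5(x) = U(x)T(x) = T(x)U(x) = U_1(x)$. Extracting $[x^n]$ from both sides of each identity gives the term-by-term equalities $t_n^{\#4} = u_n^{\#2}$ and $t_n^{\#5} = u_n^{\#1}$ for all $n \geq 1$, and these are already verified by the numerical tables printed in the excerpt (both pairs read $0,1,4,27,\dots$ and $0,1,8,63,\dots$ respectively). Substituting into the long form absorbs the three $u_n^{\#i}$ summands into the $t$-labelled ones, producing
\[
g_n = \sum_{i=1}^{3} t_n^{\#i} + 2\bigl(t_n^{\#4} + t_n^{\#5}\bigr) + u_n^{\#3} + f_n,
\]
which is the content of the corollary.

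The only place the argument needs care is bookkeeping: one has to keep the labelling of the nine convolutions straight so that the symmetries $T_4 \leftrightarrow U_2$ and $T_5 \leftrightarrow U_1$ are applied to the correct pair of sequences. There is no analytic or combinatorial difficulty beyond expanding one convolution product and invoking two immediate commutativity identities.
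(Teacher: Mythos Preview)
Your proof is correct and matches the paper's approach exactly: the paper does not give a separate proof of this corollary, but the nine-term expansion of $g_n=\sum g_ig_{n-i}$ and the generating-function identities $T_4(x)=F(x)U(x)=U_2(x)$, $T_5(x)=U(x)T(x)=U_1(x)$ are precisely the content of the text in Section~5 immediately preceding the statement, so the corollary is intended to be read off from that discussion just as you do. Your only addition is making the substitution $u_n^{\#1}=t_n^{\#5}$, $u_n^{\#2}=t_n^{\#4}$ explicit, which is implicit in the paper's listing of the $T_i$ and $U_i$.
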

\pagebreak
\section{Asymptotics 2}

In this part we will be exploring the asymptotics of the sequences which we have seen in the former chapter.

\begin{lem}
Consider the sequences that we have discussed in the former chapter:
$t_n^{\#1}$, \; $t_n^{\#2}$, \;$t_n^{\#3}$, \;$t_n^{\#4}$, \; $t_n^{\#5}$,  and $u_n^{\#3}$,  then we have the following asymptotics
\begin{equation}\notag
\begin{split}
& t_n^{\#1}\sim \bigg(\frac{14+\sqrt{7}}{63}  \bigg)\frac{12^{n-1}3}{\sqrt{\pi n^3}},\;\;\;
f_n=t_n^{\#2}\sim  \bigg(\frac{7-2\sqrt{7}}{21}\bigg) \frac{12^{n-1}3}{\sqrt{\pi n^3}},\\\\
& t_n^{\#3}\sim  \bigg(\frac{11\sqrt{7}-28}{63}\bigg) \frac{12^{n-1}3}{\sqrt{\pi n^3}},\;\;\;
u_n^{\#2}= t_n^{\#4}\sim  \bigg(\frac{5\sqrt{7}-7}{126}\bigg) \frac{12^{n-1}3}{\sqrt{\pi n^3}},\\\\
& u_n^{\#1}=t_n^{\#5}\sim  \bigg(\frac{35-5\sqrt{7}}{126}\bigg) \frac{12^{n-1}3}{\sqrt{\pi n^3}},\;\;\;
u_n^{\#3}\sim  \bigg(\frac{1}{9}\bigg) \frac{12^{n-1}3}{\sqrt{\pi n^3}}.
\end{split}
\end{equation}
\end{lem}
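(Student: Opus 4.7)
I would follow the template of Theorem~1: each of the eight generating functions $T_1,\dots,T_5,U_1,U_2,U_3$ is a product of two factors drawn from $\{F,T,U\}$, all of which share the same branch singularity at $r=1/12$ inherited from $\sqrt{1-12x}$. The products therefore inherit this singularity, and the leading asymptotics can be extracted by the same singularity analysis used in Theorem~1.

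First I would introduce the local variable $s := \sqrt{1-12x}$ (so $24x = 2-2s^2$) and expand $F,T,U$ about $x=1/12$ through first order in $s$. The inner radical in $F$ and $T$ obeys $\sqrt{5+24x+4s} = \sqrt{7+4s-2s^2} = \sqrt{7} + (2/\sqrt{7})\,s + O(s^2)$, from which one obtains $U(1/12)=1/6$, $F(1/12)=(\sqrt{7}-2)/6$, $T(1/12)=(4-\sqrt{7})/6$, together with linear coefficients $\alpha_U = -1/6$, $\alpha_F = (2\sqrt{7}-7)/42$, $\alpha_T = -(2\sqrt{7}+7)/42$ in the expansions $H(x) = H(1/12) + \alpha_H\, s + O(s^2)$.

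For any product $G(x) = H_1(x)H_2(x)$ with $H_1,H_2 \in \{F,T,U\}$, the singular part near $x=1/12$ is $\beta_G\, s + O(s^2)$ with $\beta_G = H_1(1/12)\alpha_{H_2} + H_2(1/12)\alpha_{H_1}$. The standard transfer $[x^n]\sqrt{1-12x} \sim -12^n/(2\sqrt{\pi n^3}) = -2\cdot(12^{n-1}\cdot 3)/\sqrt{\pi n^3}$ then yields
\[
[x^n]G(x) \;\sim\; (-2\beta_G)\cdot \frac{12^{n-1}\cdot 3}{\sqrt{\pi n^3}},
\]
so each claim of the lemma reduces to a short identity in $\sqrt{7}$. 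For example, for $T_1=T^2$ one has $-2\cdot 2T(1/12)\alpha_T = 4(4-\sqrt{7})(2\sqrt{7}+7)/252 = (14+\sqrt{7})/63$ via $(4-\sqrt{7})(2\sqrt{7}+7) = 14+\sqrt{7}$; for $U_3=U^2$ one gets $-2\cdot 2U(1/12)\alpha_U = 1/9$, matching the stated coefficients.

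The coincidences $f_n = t_n^{\#2}$, $u_n^{\#1}=t_n^{\#5}$ and $u_n^{\#2}=t_n^{\#4}$ appearing in the statement hold already at the generating-function level, since the products $FT$, $TU$, $FU$ coincide; only six substantively distinct cases therefore need to be worked out. The main obstacle is not analytic but purely computational: carrying the $\sqrt{7}$ arithmetic cleanly through six product expansions and collecting the resulting radicals into the stated forms. No new analytic input is required beyond what already drives the proof of Theorem~1.
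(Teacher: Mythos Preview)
Your proposal is correct and follows the same singularity-analysis route as the paper; the paper's own proof is simply the one-liner ``Proofs are similar to the proof of theorem~6. No need to repeat the same type of calculations.'' Your version is a tidy, explicit execution of exactly that plan---introducing $s=\sqrt{1-12x}$, expanding $F,T,U$ to first order, and using the product rule to read off each $\beta_G$---and indeed supplies more detail than the paper itself.
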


where 
\begin{equation}\notag
\begin{split}
&\bigg(\frac{14+\sqrt{7}}{63}\bigg)\approx 0.2642182748,\;\;  \bigg(\frac{14+\sqrt{7}}{42}\bigg)\approx 0.0813570180\\\\
&\bigg(\frac{11\sqrt{7}-28}{63}\bigg)\approx 0.0175121337,\;\;  \bigg(\frac{5\sqrt{7}-7}{126}\bigg)\approx 0.04943457584\\\\
&\bigg(\frac{35-5\sqrt{7}}{126}\bigg)\approx 0.1727876464,\;\; \bigg(\frac{1}{9}\bigg)\approx 0.1111111111.
\end{split}
\end{equation}
\begin{proof}
Proofs are similar to the proof of theorem 6. No need to repeat the same type of calculations.
\end{proof}
\subsection{comparing asymptotics with 2 valued truth tables}
\begin{equation}\notag
\begin{split}
t_n^{\#1}&:\;\; \frac{1}{2} \to \bigg(\frac{14+\sqrt{7}}{63}  \bigg)\;\;\; \textit{decreased} \approx 47.2\% \\\\
t_n^{\#2}=f_n&:\;\; \frac{3-\sqrt{3}}{6} \to \bigg(\frac{7-2\sqrt{7}}{21}\bigg)\;\;\; \textit{decreased} \approx 61.5\%  \\\\
t_n^{\#3}&:\;\; \frac{2\sqrt{3}-3}{6} \to \bigg(\frac{11\sqrt{7}-28}{63}\bigg) \;\;\; \textit{decreased} \approx 77.4\%
\end{split}
\end{equation}

\begin{verse}
Her sabah yeni bir g\"un do\~garken\\
Bir g\"un de eksilir \"om\"urden\\
Her \c{s}afak bir h\i rs\i z gibidir\\
Elinde bir fenerle gelen.\\
\"O Hayyam.
\end{verse}
\end{document}